\newtheorem{thm}{Theorem}[section]
\newtheorem{lem}[thm]{Lemma}
\newtheorem{cor}[thm]{Corollary}
\newtheorem{prop}[thm]{Proposition}
\newtheorem*{conjecture*}{Conjecture}
\newtheorem*{Mordell}{Strong Mordell Conjecture}
\theoremstyle{remark} 
\newtheorem*{question*}{Question}
\newtheorem{remark}[thm]{Remark}
\theoremstyle{definition}
\newcommand{\ZZ}{\mathbb{Z}}     
\newcommand{\RR}{\mathbb{R}}     
\newcommand{\PP}{\mathbb{P}}      
\newcommand{\Aff}{\mathbb{A}}      
\newcommand{\QQ}{\mathbb{Q}}      
\newcommand{\be}{\begin{equation}}
\newcommand{\ee}{\end{equation}}
\newcommand{\benn}{\begin{equation*}}
\newcommand{\eenn}{\end{equation*}}
\newcommand{\ba}{\begin{aligned}}
\newcommand{\ea}{\end{aligned}}
\newcommand{\bbm}{\begin{bmatrix}}
\newcommand{\ebm}{\end{bmatrix}}
\newcommand{\bpm}{\begin{pmatrix}}
\newcommand{\epm}{\end{pmatrix}}
\newcommand{\bi}{\begin{itemize}}
\newcommand{\ei}{\end{itemize}}
\newcommand{\xhelp}[1]{\textbf{Xander: #1}}
\newcommand{\Spec}{\operatorname{Spec}}
\newcommand{\supp}{\operatorname{supp}}   
\newcommand{\alg}[1]{\overline{#1}}   
\newcommand{\pc}[1]{Y^{\operatorname{pre}}\left(#1\right)}
\title[Effective Mordell and pre-images]{A remark on the Effective Mordell Conjecture and 
rational pre-images under quadratic dynamical systems}
\author{X.W.C. Faber}
\address{
Department of Mathematics and Statistics \\
McGill University \\
Montr\'eal, Qc  H3A 2K6 \\ 
CANADA} 
\email{xander@math.mcgill.ca}
\urladdr{http://www.math.mcgill.ca/xander/}
 \subjclass[2000]{14G05  (primary); 
  37F10 (secondary)}
\keywords{Quadratic Dynamical System, Uniform Bound, Pre-Image, Effective Mordell Conjecture}
\begin{document}

\begin{abstract}
		Fix a rational basepoint $b$ and a rational number $c$. For the quadratic dynamical system 
		$f_c(x) = x^2+c$, it has been shown that the number of rational points in the backward orbit of $b$ is 
		bounded independent of the choice of rational parameter $c$. In this short note we investigate the 
		dependence of the bound on the basepoint $b$, assuming a strong form of the Mordell Conjecture. 

\vskip 0.5\baselineskip

\noindent  {\bf R\textsc{\'esum\'e}. 
Une remarque sur la Conjecture de Mordell Effective et les pr\'e-images rationnelles par les syst\`emes dynamiques quadratiques.} 
Soit $b$ un point cible rationnel, et soit $c$ un nombre rationnel. Pour le syst\`eme dynamique quadratique $f_c (x) = x ^2 + c$, il a \'et\'e montr\'e que le nombre de points rationnels dans l'orbite inverse de $ b$ est born\'ee 
ind\'ependamment du choix de param\`etre rationnel $c$. Dans cette courte note, nous \'etudions la d\'ependance en le point cible $b$, en supposant une forme forte de la Conjecture de Mordell.

\end{abstract}

\maketitle

\section{Introduction}

	Let $k$ be a number field. For any $c \in k$, define a morphism $f_c: \Aff^1(k) \to \Aff^1(k)$ by $f_c(x) = x^2 + c$. For an integer $N \geq 1$, write $f_c^N$ for the $N$-fold composition of $f_c$. If we fix a ``target'' $b \in \Aff^1(k)$, we can ask questions about the size of the set of \textbf{$k$-rational iterated pre-images of $b$}:
	\be 
	\label{Eq: Pre-images}
		\bigcup_{N \geq 1} f^{-N}_c(b)(k) = \{x \in \Aff^1(k): f_c^N(x) = b \text{ for some $N \geq 1$}\}.
	\ee
These are the $k$-valued points that eventually map to $b$. For fixed $b, c$, this set is finite by Northcott's theorem. In \cite{FHIJMTZ} it was proved that for fixed $b \in \Aff^1(k)$, the cardinality of the set of $k$-rational iterated pre-images can be bounded independently of the parameter $c$:

\begin{thm}[{\cite[Theorem~4.1]{FHIJMTZ}}] 
\label{Thm: FHIJMTZ}
Let $k$ be a number field. For each $b \in \Aff^1(k)$,  
		\benn
			\sup_{c \in k} \# \Bigg\{ \bigcup_{N \geq 1} f^{-N}_c(b)(k) \Bigg\} < \infty.
		\eenn
			
\end{thm}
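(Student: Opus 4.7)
The plan is to reduce the uniform bound over $c$ to a finiteness statement about $k$-rational points on a single algebraic curve of large genus; Faltings' theorem then isolates a finite set of ``bad'' parameters $c$, and Northcott's theorem handles them.

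For each integer $N \geq 1$, consider the curve
\[
\pc{N} = \{(x,c) \in \Aff^2 : f_c^N(x) = b,\ f_c^j(x) \neq b \text{ for } 0 \leq j < N\},
\]
and let $\cpc{N}$ be a smooth projective model. The $c$-fiber $\pc{N}_c(k)$ consists of those $x \in k$ whose minimal first-passage time to $b$ under $f_c$ is exactly $N$, and as $N$ ranges over $\ZZ_{\geq 1}$ these fibers partition the pre-image set of (\ref{Eq: Pre-images}) away from $b$ itself.

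First, I would use the natural degree-two cover $\pc{N} \to \pc{N-1}$, $(x,c) \mapsto (f_c(x), c)$, together with Riemann--Hurwitz, to verify that $\cpc{N}$ is geometrically integral and has genus at least $2$ for every $N$ exceeding some explicit $N_0 = N_0(b)$. Next, I would apply Faltings' theorem to $\cpc{N_0}$: its genus is at least $2$, so $\cpc{N_0}(k)$, and therefore $\pc{N_0}(k)$, is a finite set. Let $S \subseteq k$ denote the finite image of $\pc{N_0}(k)$ under the second projection $(x,c) \mapsto c$.

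The dynamical reduction is then immediate. If $x \in k$ has minimal first-passage time $N \geq N_0$ under $f_c$, then $y := f_c^{N - N_0}(x) \in k$ has minimal first-passage time exactly $N_0$ (iterating $f_c$ strictly decreases the minimal passage time by one); in particular $(y,c) \in \pc{N_0}(k)$, forcing $c \in S$. Contrapositively, for $c \notin S$ every $k$-rational pre-image of $b$ has minimal level strictly less than $N_0$, whence
\[
\#\bigcup_{N \geq 1} f_c^{-N}(b)(k) \leq 2^{N_0}.
\]
For the finitely many $c \in S$, Northcott's theorem applied to the degree-$2$ endomorphism $f_c$ of $\PP^1$ bounds $\#\bigcup_N f_c^{-N}(b)(k)$ by some finite $T(c)$, and taking the maximum yields the desired $c$-uniform bound.

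The main obstacle is the first step: the explicit genus computation and the identification of $N_0(b)$. Ramification in the tower of degree-two covers depends delicately on the forward orbit of the critical point $0$ and on whether $b$ is pre-periodic for some special $c$; pathological choices of $b$ could destroy geometric irreducibility at low levels or postpone the onset of $g \geq 2$. Once the genus bound is secured at a single level $N_0$, however, the remainder of the argument is essentially formal and invokes Faltings' theorem only once.
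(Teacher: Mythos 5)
Your overall architecture --- reduce to $k$-rational points on a fixed-level preimage curve, invoke Faltings' theorem to isolate a finite exceptional set $S$ of parameters, count $k$-preimages trivially outside $S$ and via Northcott on $S$ --- is precisely the FHIJMTZ strategy that the paper cites and sketches in its introduction rather than reproving. The reduction to a single level $N_0$ by pushing forward along $(x,c)\mapsto(f_c(x),c)$, and the partition by minimal first-passage time (which discards at most the single point $x=b$), are both sound; so the formal skeleton of the argument is correct.

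The genuine gap is the one you yourself flag as ``the main obstacle,'' and it is load-bearing rather than cosmetic: you assert but do not establish that for some $N_0=N_0(b)$ every geometrically irreducible component of $\cpc{N_0}$ has genus at least $2$. (Your phrase ``geometrically integral'' asks for more than is true --- the paper explicitly records, citing the computations in [FHIJMTZ, Table~3.4], that the fifth-preimage fiber over a special $b$ can be reducible.) What actually makes the argument go, and what FHIJMTZ prove, is the sharper statement that $N_0=5$ works uniformly in $b$: the smooth fibers of the fifth-preimage family have genus $17$, and even on singular fibers every irreducible component has genus at least $3$. A soft Riemann--Hurwitz argument on the tower $\pc{N}\to\pc{N-1}$ does not obviously deliver this. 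The degree-two cover ramifies above the critical point $x=0$, i.e.\ over the locus $\{y=c\}$ on $\pc{N-1}$, and how much of that ramification lands on each component --- and indeed how many components there are and what their genera are --- depends on where $b$ sits relative to the critical orbit of $f_c$, exactly the ``pathological choices of $b$'' you worry about. Until the per-component genus bound is actually proved (by the explicit computation of FHIJMTZ, or a careful case analysis of your tower), the proof is incomplete at the very step that licenses the appeal to Faltings.
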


A natural question is, ``How does this quantity depend on the target $b$?'' The proof of Theorem~\ref{Thm: FHIJMTZ} uses Faltings' Theorem (n\'ee Mordell Conjecture) 
to bound the number of $c \in k$ for which $b$ has an $N^{\mathrm{th}}$ pre-image for large $N$. But without a height bound for these values of $c$, we cannot hope to bound the set in \eqref{Eq: Pre-images}.

	With these considerations in mind, we apply a strong form of the Mordell Conjecture for a certain family of curves to see how these quantities depend on the target $b$. A precise statement of the conjecture is given in Section~\ref{Sec: Mordell}. For the following statement, we write $h: \Aff^1(\alg{\QQ}) \to \RR$ for the absolute Weil height.
	
\begin{thm}
\label{Thm: Height Bound for c}
	Assume the Strong Mordell Conjecture in the form given in \S2. Let $k$ be a number field. There exist constants $\alpha_1, \alpha_2 > 0$, depending only on $k$, with the following property: if $b \in \Aff^1(k)$ and $c \in k$ are such that $f_c^{-N}(b)(k)$ is nonempty for some $N \geq 5$, then 
	\[
		h(c) \leq \alpha_1 h(b) + \alpha_2.
	\]
\end{thm}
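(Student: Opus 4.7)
The plan is to reformulate the existence of a $k$-rational $N$-th preimage of $b$ as the existence of a $k$-rational point on a plane curve whose defining equation depends on $b$ only through its constant term, and then apply the Strong Mordell Conjecture of \S2 to extract a linear bound on $h(c)$ in terms of $h(b)$.

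First I would reduce to the case $N = 5$. If $x \in k$ and $c \in k$ satisfy $f_c^N(x) = b$ with $N \geq 5$, then $y := f_c^{N-5}(x)$ lies in $k$ and satisfies $f_c^5(y) = b$. Hence it is enough to bound $h(c)$ in the presence of a pair $(y,c) \in k \times k$ with $f_c^5(y) = b$. Writing $P(y,c) := f_c^5(y) \in \ZZ[y,c]$, let $C_b \subset \Aff^2$ be the affine curve defined by $P(y,c) - b = 0$; its $k$-rational points are exactly the sought-after pairs. The crucial feature is that $P$ is a \emph{universal} polynomial with fixed integer coefficients, so $b$ enters only through the constant term $-b$, and any reasonable height of the defining equation of $C_b$ is bounded by $h(b) + O(1)$ with an absolute implicit constant.

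The third and most delicate step is geometric: one must show that for all but finitely many $b \in \Aff^1(k)$ (each of bounded height, hence absorbable into $\alpha_2$), the smooth projective completion of $C_b$ has geometric genus at least $2$. A natural approach is to exploit the identity $P(y,c) - b = f_c^4(y)^2 - (b-c)$, which realizes $C_b$ as a degree-$16$ cover of the smooth rational curve $\{u^2 = b-c\}$ via $(y,c) \mapsto (f_c^4(y), c)$; a careful Riemann--Hurwitz calculation, tracking the ramification contributed by the critical points of $f_c^4$ and the behavior at infinity, should yield the required genus lower bound. This analysis is closely related to that underlying Theorem~\ref{Thm: FHIJMTZ}.

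Once genus $\geq 2$ is in hand, the conclusion is immediate: the Strong Mordell Conjecture of \S2 provides constants depending only on $k$ for which the height of any $k$-rational point on a genus-$\geq 2$ curve $C/k$ is bounded by a linear function of a suitable height of its defining equation. Applied to $C_b$ and projected onto the $c$-coordinate, this yields the desired inequality $h(c) \leq \alpha_1 h(b) + \alpha_2$. The main obstacle I anticipate is Step 3 --- proving the uniform genus bound cleanly and identifying the exceptional locus --- together with matching the precise notion of ``height of the curve'' used in \S2 to the obvious coefficient height of $P(y,c) - b$ so that the dependence on $h(b)$ is indeed linear.
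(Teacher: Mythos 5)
Your reduction to $N=5$ and your identification of the relevant affine curve $C_b$ match the paper's opening moves, but there are two substantive problems in how you then invoke the conjecture.

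First, the form of the Strong Mordell Conjecture you appeal to is not the one stated in \S2. You write that it gives constants depending only on $k$ bounding the height of rational points on any genus $\geq 2$ curve linearly in ``a suitable height of its defining equation.'' The conjecture in \S2 says nothing of the sort: it applies to one \emph{fixed} family $\pi:X\to U$ over a base $T$, and the constants depend on $k$, on that family, on the chosen section or morphism, and on the chosen Weil heights --- the bound is linear in the Weil height of the base point $t\in U(k)$, not in a coefficient height of a defining equation. To make the argument run, you must first recognize that the $C_b$'s are all fibers of a single fixed family (here $Y=\{f_c^5(x)=b\}\subset\Aff^3$ fibered over $\Aff^1$ by $b$), compactify it to a projective family $X\to\Aff^1\subset\PP^1$, introduce the auxiliary morphism $\gamma:X\to\PP^1$ extending $(c,x,b)\mapsto(c:1)$, and only then apply Version~2 of the conjecture. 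That is what makes $h(b)$ appear on the right-hand side: $b$ is literally the parameter of the family, so no ad hoc ``matching'' of a height of the defining polynomial is needed. Absent that framing, your invocation of \S2 is off by a quantifier and by a dependence.

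Second, your treatment of the exceptional $b$'s is a genuine non sequitur. You propose to prove genus $\geq 2$ ``for all but finitely many $b\in\Aff^1(k)$, each of bounded height, hence absorbable into $\alpha_2$.'' But the additive constant $\alpha_2$ absorbs only a uniform \emph{upper bound on $h(c)$} coming from those bad fibers; the fact that those finitely many $b$'s have bounded height tells you nothing about the $c$'s occurring there. If a bad fiber $C_b$ had genus $\leq 1$, it could carry infinitely many $k$-points with $h(c)$ unbounded, and your inequality would fail for that $b$. The paper instead cites \cite[Thm.~3.2 and Table~3.4]{FHIJMTZ} to conclude that every irreducible component of every fiber (including singular ones) has geometric genus at least~3, and then uses Remark~\ref{Rem: Singular Fibers} to extend Version~2 of the conjecture to such degenerate fibers. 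Your proposed Riemann--Hurwitz computation via the degree-$16$ cover to $\{u^2=b-c\}$ is not carried out, and even if it were, it would only establish the generic genus; one still has to control the bad fibers, and ``they have bounded height'' is not a substitute for that control.
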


	An estimate of the form $h(c) = O(h(b))$ is the best one can hope for. Indeed, let $c \in \ZZ$ be large and set $b = f_c^5(0)$. Since $|b| \approx |c|^{16}$, it follows that
	\[
		h(b) = \log |b| \approx 16 \log|c| = 16h(c).
	\]

\begin{cor}
\label{Cor: Bound for pre-images}
	Fix a number field $k$. Assuming the Strong Mordell Conjecture in the form given in~\S2, there exists an effectively computable constant $\gamma = \gamma(k)$ such that for any $b \in \Aff^1(k)$, we have
	\[
		\sup_{c \in k} \# \Bigg\{\bigcup_{N \geq 1} f_c^{-N}(b)(k) \Bigg\} \ll_k \, \exp\left(\gamma h(b)\right).
	\]
\end{cor}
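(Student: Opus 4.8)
The plan is to reduce the count of $k$-rational iterated pre-images to a count of $k$-points of bounded height, using Theorem~\ref{Thm: Height Bound for c} to control $h(c)$ in terms of $h(b)$. The first ingredient is a quantitative, \emph{uniform} version of Northcott's theorem for the pre-image set. Writing $A := h(c) + \log 2$, the elementary inequalities $h(uv) \le h(u) + h(v)$ and $h(u+v) \ge h(u) - h(v) - \log 2$ give $h(f_c(x)) = h(x^2+c) \ge 2h(x) - A$ for every $x \in \alg{\QQ}$. Iterating this along an orbit $x, f_c(x), \dots, f_c^N(x) = b$ yields $h(b) \ge 2^N h(x) - (2^N - 1)A$, whence
\[
	h(x) \le 2^{-N} h(b) + A \le h(b) + h(c) + \log 2
\]
for every $N \ge 1$, every $c \in k$, and every $x \in f_c^{-N}(b)(\alg{\QQ})$. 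In particular, every element of $\bigcup_{N \ge 1} f_c^{-N}(b)(k)$ has height at most $h(b) + h(c) + \log 2$.

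Now fix $b \in \Aff^1(k)$ and let $c \in k$ be arbitrary. If $f_c^{-N}(b)(k)$ is empty for every $N \ge 5$, then $\bigcup_{N \ge 1} f_c^{-N}(b)(k) = \bigcup_{N=1}^{4} f_c^{-N}(b)(k)$; since $f_c^N(x) - b$ is a polynomial of degree $2^N$, this set has at most $2 + 4 + 8 + 16 = 30$ elements, a bound independent of $b$ and $c$. Otherwise $f_c^{-N}(b)(k)$ is nonempty for some $N \ge 5$, and Theorem~\ref{Thm: Height Bound for c} yields $h(c) \le \alpha_1 h(b) + \alpha_2$; feeding this into the bound of the previous paragraph, every $k$-rational iterated pre-image of $b$ then has height at most $(1+\alpha_1) h(b) + (\alpha_2 + \log 2)$.

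Finally I would invoke the standard effective count of rational points of bounded height: there are constants $C(k)$ and $\kappa(k)$ — one may take $\kappa(k) = 2[k:\QQ]$ — such that $\#\{x \in k : h(x) \le T\} \le C(k)\, e^{\kappa(k) T}$ for all $T \ge 0$. Applying this with $T = (1+\alpha_1) h(b) + \alpha_2 + \log 2$ bounds the number of iterated pre-images in the second case by $\ll_k e^{\gamma h(b)}$ with $\gamma := \kappa(k)(1+\alpha_1)$; since $h(b) \ge 0$, the uniform constant $30$ from the first case is absorbed. Taking the supremum over $c \in k$ gives the Corollary, and $\gamma$ is effectively computable provided $\alpha_1$ is, which is the case if the Strong Mordell Conjecture of \S2 is stated effectively. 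Granting Theorem~\ref{Thm: Height Bound for c}, this argument is essentially bookkeeping; the only genuine subtlety is that the Northcott-type height bound for the pre-images must be uniform in both the iterate $N$ and the parameter $c$, which is exactly what the displayed inequality provides.
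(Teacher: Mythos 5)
Your proof is correct and follows the same overall strategy as the paper's: bound the height of every $k$-rational iterated pre-image of $b$ linearly in $h(b)$ and $h(c)$, use Theorem~\ref{Thm: Height Bound for c} to convert $h(c)$ into $h(b)$, and then invoke the Schanuel-type point count for $\PP^1(k)$. The difference lies in how the height bound on pre-images is obtained. The paper factors through the canonical height $\hat{h}_{f_c}$: it cites the Call--Silverman comparison $\left|\hat{h}_{f_c}(x) - h(x)\right| \leq \beta_1 h(c) + \beta_2$ (Lemma~\ref{Lem: Can. Ht. Bnd}) and combines it with the exact functional equation $\hat{h}_{f_c}\left(f_c^N(x)\right) = 2^N \hat{h}_{f_c}(x)$ to get the sharp estimate $\left| h(x) - 2^{-N} h(b) \right| \leq (1+2^{-N})(\beta_1 h(c) + \beta_2)$ of Corollary~\ref{Cor: x bound}. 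You bypass the canonical height entirely, deriving the one-step inequality $h(f_c(x)) \geq 2h(x) - (h(c) + \log 2)$ from elementary height axioms and iterating. This is more self-contained (and essentially re-derives the lower half of the canonical-height comparison), at the cost of losing the explicit $2^{-N}$ decay in the coefficient of $h(b)$; but since one only needs a bound that is linear in $h(b)$ uniformly in $N$, nothing is lost. The case division is also organized a bit differently: the paper partitions the union over $N$, handling $1 \leq N \leq 4$ by the trivial degree bound $\leq 30$ and $N \geq 5$ by the height estimate, whereas you condition on $c$ --- either no $N \geq 5$ has a $k$-rational pre-image (trivial count of $30$), or some does, in which case the bound on $h(c)$ controls all pre-images at once, including the small-$N$ ones. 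Both decompositions are valid, and your constants ($\gamma = 2[k:\QQ](1+\alpha_1)$) are effective whenever the $\alpha_i$ are.
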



The remainder of this article is organized as follows. In \S\ref{Sec: Mordell} we state the Strong Mordell Conjecture and discuss an alternative form that is better for applications. The observations in this section are presumably well-known. 
We apply these considerations to a particular family of curves of genus~17 in order to deduce Theorem~\ref{Thm: Height Bound for c} in \S\ref{Sec: Theorem}. Finally, we discuss the canonical height for the morphism $f_c$ and use height comparison estimates to prove the corollary in \S\ref{Sec: Corollary}.   \newline

\noindent \textbf{Acknowledgments:} My thanks go to Patrick Ingram for an invaluable discussion on canonical heights in families,  to Jordan Ellenberg for pointing out an improvement in the statement of Corollary~\ref{Cor: Bound for pre-images}, and to the anonymous referee for catching many small mistakes. This work was supported in part by a National Science Foundation Postdoctoral Research Fellowship.




\section{A Strong Mordell Conjecture}
\label{Sec: Mordell}

\noindent  \textbf{Conventions and Notation:} All varieties and morphisms are defined over a fixed number field~$k$. We set the following notation for this section:\newline

	\begin{tabular}{l c l}
		$T$ & \hspace{0.2cm} &  smooth connected projective variety \\
		$U \subseteq T$ & & nonempty Zariski open subset of $T$ \\
		$\xi$ & & generic point of $T$  (or $U$)\\
		$ \pi: X \to U$ & & smooth family of projective geometrically \\
			& &  \hspace{0.2cm}connected curves of genus $g \geq 2$ \\
		$X_t = \pi^{-1}(t)$ & & fiber of $\pi$ over the point $t \in U$ \\
		$\sigma: U \to X$ & & section of $\pi$ \\
		$D_T$ & & ample divisor on $T$ \\
		$h_{T, D_T}$ & & fixed Weil height associated to $D_T$ \\
		$j_t:X_t \to \mathrm{Jac}(X_t)$ && closed immersion satisfying  \\
			& & \hspace{0.2cm}$j_t(x) = [x] - [\sigma(t)]$ for $x \in X_t\left(\alg{k} \right)$ \\
		$\hat{h}_{t}$
			& & N\'eron-Tate canonical height on the fiber $X_t$ given   \\
			& & \hspace{0.2cm}by pullback of the canonical height on $\mathrm{Jac}(X_t)$ \\
			& &\hspace{0.2cm}associated to the symmetrized theta divisor $\Theta_t + [-1]^*\Theta_t$, \\
			& & \hspace{0.2cm}where 
				$\Theta_t = \underbrace{j_t(X_t) + \cdots + j_t(X_t)}_{g-1 \text{ summands}}$.
	\end{tabular} \newline

	The following effective form of the Mordell conjecture appears in \cite[Conjecture F.4.3.2]{Hindry_Silverman_Book_2000}\footnote{Because of the ambiguity in fixing a Weil height on $T$, the statement as given there cannot be true unless one replaces $ch_T(t)$ by $c_1h_T(t) + c_2$.}:

\begin{Mordell}[Version 1]
	There are effectively computable constants $A, B > 0$ --- depending on the number field $k$, the family $\pi: X \to U$, the height function $h_{T, D_T}$, and the section $\sigma$ --- such that the following bound holds:
	\be
	\label{Eq: Mordell Bound}
		\hat{h}_{t}(x) \leq A \;h_{T, D_T}(t) + B  \quad \text{for all $t \in U(k),  x \in X_t(k)$.}
	\ee
\end{Mordell}

	The best result toward the conjecture is due to de Diego \cite{deDiego_Faltings_in_Families_1996}, in which she extends to families the proof of the Mordell Conjecture as given by Vojta and simplified by Bombieri, but initially proved by Faltings~\cite{Vojta_Siegel_Mordell_1991, Bombieri_Mordell_1990, Faltings_Mordell}. Her result asserts that if $t \in U(k)$ and $x \in X_t(k)$, then either the height estimate~\eqref{Eq: Mordell Bound} holds, or else $x$ lies in an exceptional set of cardinality bounded by a function of the Mordell-Weil rank of the Jacobian $\mathrm{Jac}(X_t)$. (The constants in de Diego's result are independent of the field extension $k$, while we cannot expect such uniformity in the above conjecture.)

	The Strong Mordell Conjecture is perhaps most naturally stated using the canonical height on the fibers of the family. But if one is presented with a family given by explicit equations, then it would be more useful in practice to take advantage of the given system of coordinates. 
		
\begin{Mordell}[Version 2]
	Let $S$ be a smooth connected projective variety and $D_S$ an ample divisor on $S$. Fix a Weil height $h_{S, D_S}$ associated to $D_S$. Suppose that $\gamma: X \to S$ is a morphism such that $\gamma^*D_S$ is relatively ample for $\pi$. Then there are effectively computable constants $A, B > 0$ --- depending on the number field $k$, the family $\pi: X \to U$, the morphism $\gamma: X \to S$, and the height functions $h_{T, D_T}$ and $h_{S, D_S}$ --- such that the following bound holds:
	\[
		h_{S, D_S}(\gamma(x)) \leq A \;h_{T, D_T}(t) + B  \qquad \text{for all } t \in U(k) \text{ and } x \in X_t(k).
	\]
\end{Mordell}

	These two versions of the Strong Mordell Conjecture are equivalent. Indeed, an argument similar to the proof of Theorem~3.1 of \cite{Call_Silverman_1993} shows that there exist positive constants $\alpha_1, \alpha_2, \alpha_3$ such that
	\benn
			\frac{1}{\alpha_1}\hat{h}_{ t}(x) - \alpha_2 h_{T, D_T}(t) - \alpha_3 
			\leq h_{S, D_S}\left(\gamma(x)\right) 
			\leq \alpha_1\hat{h}_{ t}(x) + \alpha_2 h_{T, D_T}(t) + \alpha_3,
	\eenn
 valid for all $t \in U\left(\alg{k}\right)$ and $x \in X_t\left(\alg{k}\right)$. 

\begin{remark}
	An effective form of the usual Mordell Conjecture follows from the Strong Mordell Conjecture. For if $C/k$ is a smooth projective curve of genus at least~2, then we consider the family $X = C \times \PP^1 \to \PP^1$ given by projection on the second factor. Let $h$ denote the absolute logarithmic height on $\PP^1$. Choosing any point $t_0 \in \PP^1(k)$, the Strong Mordell Conjecture gives
	\[
		x \in C\left(k\right) \Longrightarrow \hat{h}(x) = \hat{h}_{t_0}(x) \leq A \; h(t_0) + B.
	\]
That is, $C\left(k\right)$ is a set of bounded height, and hence finite by Northcott's theorem. 
\end{remark}

\begin{remark}
\label{Rem: Singular Fibers}
	For Version~2 of the Strong Mordell Conjecture, there is no harm in supposing that $\pi: X \to U$ is only flat, generically smooth, and that every irreducible component of a fiber $X_t$ has geometric genus at least~2. This allows for finitely many singular fibers (and fibers with multiple irreducible components). Indeed, we could apply Version~2 of the conjecture to the family given by deleting the singular fibers, and then adjust the constants after applying the previous remark to the normalization of each singular fiber.
\end{remark}

\section{The Family of $5^{\mathrm{th}}$ Pre-Image Curves}
\label{Sec: Theorem}

	Recall that $f_c(x) = x^2 + c$. We consider the hypersurface $Y \subset \Aff^3 = \Spec \QQ[c,x,b]$ defined by the equation $f_c^5(x) = b$. We view it as a family of curves via the morphism $\pi_0: Y \to \Aff^1$ satisfying $\pi_0(c,x,b) = b$. 	For fixed $b \in \Aff^1\left(\alg{\QQ}\right)$, the points of $Y_b(\alg{\QQ}) = \pi_0^{-1}(b)$ parametrize pairs $(c,x)$ such that $x$ is a $5^{\mathrm{th}}$ pre-image of $b$ under the map $f_c$.  This family is generically smooth by \cite[Prop~2.1]{FHIJMTZ}. Each of the smooth fibers has genus~17 \cite[Thm.~3.2]{ FHIJMTZ}, and the computations summarized in Table~3.4 of \textit{loc. cit.} show that even if a fiber $Y_b$ is singular, all of its irreducible components have genus at least~3. 
		
	In order to apply the Strong Mordell Conjecture to this family, we must first compactify its fibers. One approach is to consider the closed immersion $\psi: Y \to \Aff^5 \times \Aff^1$ defined by
	\[(c,x,b) \mapsto \left( \left(x, f_c(x), f_c^2(x), f_c^3(x), f_c^4(x)\right), b \right).\]
Write $X = \alg{Y} \subset \PP^5 \times \Aff^1$ for the relative projective closure. An explicit ideal of equations can be written down for $X$, and then one verifies by the Jacobian criterion that the family $\pi: X \to \Aff^1$ given by the second projection is generically smooth. See \cite[\S4]{Faber_Hutz_Origin_2008} for details.


If we set $\gamma_0: Y \to \PP^1$ to be $\gamma_0((c,x,b)) = (c:1)$, then this map extends to a morphism $\gamma: X \to \PP^1$. For any fixed $c_0 \in k$, the equation $f^5_{c_0}(x) = b$ has degree~$32$ in the variable $x$. It follows that the divisor $\gamma^{-1}\left((c_0:1)\right)$ is relatively ample on all smooth fibers of $\pi$.

\begin{proof}[Proof of Theorem~\ref{Thm: Height Bound for c}]
	Note that the absolute logarithmic Weil height $h$ on $\PP^1$ is a Weil height associated to a divisor of degree~1. If we let $S = T = \PP^1$, and we fix $h$ as our height on $S, T$, then we are in a position to apply Version~2 of the Strong Mordell Conjecture to the family $\pi: X \to \Aff^1$. It follows that for any number field $k$, there exist positive constants $\alpha_1, \alpha_2$ such that 
	\[
		h\left(\gamma(P)\right) \leq \alpha_1 h(b) + \alpha_2 \quad \text{for all $b \in \Aff^1(k), P \in X_b(k)$.}
	\]
Note that we have used Remark~\ref{Rem: Singular Fibers} to take care of the singular fibers simultaneously.
If $P = P(x,c,b) \in Y_b(k)$ corresponds to a solution of $f_c^5(x) = b$, then $\gamma(P) = c$, and we recover
	\be
	\label{Eq: Height of c}
		h(c) \leq \alpha_1 h(b) + \alpha_2 \quad \text{for all $b \in \Aff^1(k), P \in X_b(k)$.}
	\ee

	To complete the proof, suppose that $x_0, b \in \Aff^1(k)$ and $c \in k$ satisfy $f_{c}^N(x_0) = b$ for some $N \geq 5$. Setting $x =  f_c^{N-5}(x_0)$, we find that $f_c^5(x) = b$. Now apply inequality~\eqref{Eq: Height of c} to deduce a height inequality for $c$.
\end{proof}


\section{The Canonical Height for the Morphism $f_c$}
\label{Sec: Corollary}
	
For each $c \in \alg{\QQ}$, one can define the canonical height associated to the morphism $f_c$ (of degree~2) by 
	\[
		\hat{h}_{f_c}(x) = \lim_{n \to \infty} \frac{h\left(f_c^n(x)\right)}{2^n} \qquad \text{for } x \in \Aff^1(\alg{\QQ}).
	\]
The article \cite{Call_Silverman_1993} shows that this limit exists, and that the functions $\hat{h}_{f_c}$ and $h$ differ by a bounded amount. In fact, they explain how the bound varies:

\begin{lem}[{\cite[Thm.~3.1]{Call_Silverman_1993}}]
\label{Lem: Can. Ht. Bnd}
	There exist constants $\beta_1, \beta_2 > 0$ such that for every $c \in \alg{\QQ}$ and $x \in \Aff^1(\alg{\QQ})$, 
		\[
			\left|\hat{h}_{f_c}(x) - h(x) \right| \leq \beta_1 h(c) + \beta_2.
		\]
\end{lem}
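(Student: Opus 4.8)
The plan is to prove Lemma~\ref{Lem: Can. Ht. Bnd} by following the standard telescoping argument for canonical heights, while carefully tracking how the error term depends on the parameter $c$. First I would establish a one-step estimate: there are absolute constants $\beta_1, \beta_2 > 0$ such that
\[
	\left| h\left(f_c(x)\right) - 2h(x) \right| \leq \beta_1 h(c) + \beta_2 \qquad \text{for all } c, x \in \Aff^1(\alg{\QQ}).
\]
This comes directly from the definition $f_c(x) = x^2 + c$ together with the elementary properties of the Weil height on $\PP^1$ (or $\Aff^1$): the height of a sum satisfies $h(x^2 + c) \leq h(x^2) + h(c) + \log 2 = 2h(x) + h(c) + \log 2$, giving the upper bound, and the reverse inequality $h(x^2) = h((x^2+c) + (-c)) \leq h(f_c(x)) + h(c) + \log 2$ rearranges to $2h(x) - h(c) - \log 2 \leq h(f_c(x))$, giving the lower bound. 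So one may take $\beta_1 = 1$ and $\beta_2 = \log 2$ in the one-step bound.

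Next I would telescope. Writing $a_n = h(f_c^n(x))$, the one-step estimate gives $|a_{n+1} - 2a_n| \leq \beta_1 h(c) + \beta_2$ for every $n \geq 0$. Dividing by $2^{n+1}$ and summing over $n = 0, 1, 2, \dots$ shows that the sequence $a_n / 2^n$ is Cauchy, hence $\hat{h}_{f_c}(x) = \lim_n a_n / 2^n$ exists; this reproves the existence claim of \cite{Call_Silverman_1993} but is needed here. Then
\[
	\left| \hat{h}_{f_c}(x) - h(x) \right| = \left| \sum_{n=0}^{\infty} \left( \frac{a_{n+1}}{2^{n+1}} - \frac{a_n}{2^n} \right) \right| \leq \sum_{n=0}^{\infty} \frac{1}{2^{n+1}} \left| a_{n+1} - 2a_n \right| \leq \left( \beta_1 h(c) + \beta_2 \right) \sum_{n=0}^{\infty} \frac{1}{2^{n+1}} = \beta_1 h(c) + \beta_2.
\]
This yields exactly the stated inequality with the same constants $\beta_1, \beta_2$ (in fact one can absorb the geometric sum and still keep $\beta_1 = 1$, $\beta_2 = \log 2$).

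I do not expect a genuine obstacle here: the argument is the classical Call--Silverman construction, and the only point requiring attention is making sure the constant in front of $h(c)$ in the one-step estimate is independent of $c$ and $x$, which it manifestly is since it arises only from the additivity-up-to-$\log 2$ property of the height and the identity $h(x^2) = 2h(x)$ on $\PP^1$. The mild subtlety is bookkeeping: one should state the one-step bound as a clean lemma first, then observe that telescoping preserves the form $(\text{const})\,h(c) + (\text{const})$ because $h(c)$ does not change as we iterate (the parameter $c$ is fixed throughout the orbit). Since the paper is content to cite \cite[Thm.~3.1]{Call_Silverman_1993} for this statement, a full self-contained write-up is optional, but the above is the proof that reference is invoking.
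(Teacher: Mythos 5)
Your proof is correct and is the standard Call--Silverman telescoping argument that the cited reference invokes: one establishes the single-step comparison $\left|h\left(f_c(x)\right) - 2h(x)\right| \leq h(c) + \log 2$ (either via the global bound $h(a+b) \leq h(a)+h(b)+\log 2$ together with $h(x^2)=2h(x)$, as you do, or by an equivalent place-by-place computation), and then telescopes the series $\sum_n \left(a_{n+1}/2^{n+1} - a_n/2^n\right)$. Your bookkeeping moreover yields the sharp constants $\beta_1 = 1$, $\beta_2 = \log 2$ directly, which matches the improvement that the remark following the lemma attributes to \cite[Thm.~B.2.5]{Hindry_Silverman_Book_2000}.
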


\begin{remark}
	With extra work, one can obtain the upper bound $ h(c) + \log 2$ in the lemma:
 cf. \cite[Thm.~B.2.5]{Hindry_Silverman_Book_2000}. 
\end{remark}

\begin{cor}
\label{Cor: x bound}
	Suppose $\beta_1, \beta_2$ are the constants from Lemma~\ref{Lem: Can. Ht. Bnd}. Let $N \geq 1$ be an integer, let $c \in \alg{\QQ}$ be an algebraic number, and let $x,b \in \Aff^1(\alg{\QQ})$ be points such that $f_c^N(x) = b$. Then
		\[
			\left|h(x) - \frac{1}{2^N}h(b) \right| \leq \left(1+\frac{1}{2^N}\right)\left(\beta_1 h(c) + \beta_2\right).
		\]
\end{cor}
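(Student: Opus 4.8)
The plan is to derive the corollary from Lemma~\ref{Lem: Can. Ht. Bnd} using the exact scaling property of the canonical height under $f_c$. The key observation is that $\hat{h}_{f_c}$ satisfies $\hat{h}_{f_c}(f_c(y)) = 2\,\hat{h}_{f_c}(y)$ for every $y \in \Aff^1(\alg{\QQ})$, which follows immediately from the limit definition by reindexing. Iterating $N$ times gives $\hat{h}_{f_c}(f_c^N(x)) = 2^N \hat{h}_{f_c}(x)$, i.e., $\hat{h}_{f_c}(b) = 2^N \hat{h}_{f_c}(x)$ under the hypothesis $f_c^N(x) = b$. Equivalently, $\hat{h}_{f_c}(x) = 2^{-N}\hat{h}_{f_c}(b)$.

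With this identity in hand, the proof is a two-step triangle-inequality estimate. First I would apply Lemma~\ref{Lem: Can. Ht. Bnd} at the point $x$ to get $|h(x) - \hat{h}_{f_c}(x)| \leq \beta_1 h(c) + \beta_2$. Then I would apply the same lemma at the point $b$, divide by $2^N$, and use the scaling identity to obtain
\[
	\left| \frac{1}{2^N} h(b) - \hat{h}_{f_c}(x) \right| = \frac{1}{2^N}\left| h(b) - \hat{h}_{f_c}(b) \right| \leq \frac{1}{2^N}\left(\beta_1 h(c) + \beta_2\right).
\]
Adding the two displayed bounds via the triangle inequality $|h(x) - 2^{-N}h(b)| \leq |h(x) - \hat{h}_{f_c}(x)| + |\hat{h}_{f_c}(x) - 2^{-N}h(b)|$ yields exactly the claimed inequality with the factor $(1 + 2^{-N})(\beta_1 h(c) + \beta_2)$.

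There is no serious obstacle here; the only point requiring a word of care is the justification of the scaling identity $\hat{h}_{f_c}(f_c(y)) = 2\,\hat{h}_{f_c}(y)$, which is standard but should be noted explicitly since the corollary's sharp constant depends on using the exact (not approximate) functional equation. One computes directly from the definition: $\hat{h}_{f_c}(f_c(y)) = \lim_{n\to\infty} 2^{-n} h(f_c^n(f_c(y))) = \lim_{n\to\infty} 2^{-n} h(f_c^{n+1}(y)) = 2\lim_{m\to\infty} 2^{-m} h(f_c^m(y)) = 2\,\hat{h}_{f_c}(y)$. Everything else is the triangle inequality, so the proof is short.
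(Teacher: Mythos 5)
Your proof is correct and follows essentially the same route as the paper: both use the exact scaling identity $\hat{h}_{f_c}(b) = 2^N \hat{h}_{f_c}(x)$ to rewrite $h(x) - 2^{-N}h(b)$ as a sum of two error terms and then apply Lemma~\ref{Lem: Can. Ht. Bnd} together with the triangle inequality. The only difference is cosmetic (you spell out the verification of the functional equation, which the paper takes as given).
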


\begin{proof}
	By the definition of the canonical height $\hat{h}_{f_c}$, we have $\hat{h}_{f_c}(f_c(x)) = 2 \hat{h}_{f_c}(x)$. Iterating this relation shows 
		$\hat{h}_{f_c}(b) = \hat{h}_{f_c}\left(f^N_c(x)\right) = 2^N \hat{h}_{f_c}(x)$, 
and hence
	\[
		\left|h(x) - \frac{1}{2^N}h(b) \right| = \left| \left(h(x) - \hat{h}_{f_c}(x) \right)
			+ \frac{1}{2^N}\left(\hat{h}_{f_c}(b) - h(b)\right) \right|.
	\]
Now apply the triangle inequality and the previous lemma.
\end{proof}

\begin{proof}[Proof of Corollary~\ref{Cor: Bound for pre-images}]
	Let $k$ be a number field, and let $\alpha_i, \beta_i$ be the absolute constants appearing in Theorem~\ref{Thm: Height Bound for c} and in Lemma~\ref{Lem: Can. Ht. Bnd}. Fix $b \in \Aff^1(k)$ and $c \in k$ for the duration of the proof.
	
	Suppose first that $x \in f_c^{-N}(b)(k)$ for some $N \geq 5$. Then Theorem~\ref{Thm: Height Bound for c} shows
	\be
	\label{Eq: c bound}
		h(c) \leq \alpha_1 h(b) + \alpha_2.
	\ee
Applying Corollary~\ref{Cor: x bound} gives
	\be
	\label{Eq: N at least 5}
		\ba
		h(x)  & \leq \frac{1}{2^N} h(b) + \left(1+ \frac{1}{2^N}\right)\left(\beta_1 h(c) + \beta_2 \right)  \\
			& \leq \frac{1}{32}h(b) + \frac{33}{32}\left(\beta_1 h(c) + \beta_2 \right) 
				\hspace{2cm} \text{since $N \geq 5$} \\
			& \leq \alpha_1' h(b) + \alpha_2' \hspace{4.28cm} \text{by \eqref{Eq: c bound}},
		\ea
	\ee
where $\alpha_1'$ and $\alpha_2'$ are constants depending only on $k$. 

	For $N \leq 4$, since $f_c(x) = x^2 + c$ has degree~2, we find that
	\be
	\label{Eq: Small N}
			\# \Bigg\{ \bigcup_{1 \leq N \leq 4} f^{-N}_c(b)(k) \Bigg\} 
				 \leq \sum_{1 \leq N \leq 4} \# f^{-N}_c(b)(k) 
				 \ \leq \sum_{1 \leq N \leq 4} 2^N = 30.
	\ee
	
	Combining~\eqref{Eq: N at least 5} and~\eqref{Eq: Small N} shows
	\benn
		\ba
			\# \Bigg\{ \bigcup_{N \geq 1} f^{-N}_c(b)(k) \Bigg\} &\leq 30 +
			\#\left\{x \in \Aff^1(k): h(x) \leq \alpha_1' h(b) + \alpha_2' \right\}. \\
		\ea
	\eenn
To complete the proof, we note that the number of points in $\Aff^1(k)$ with absolute logarithmic height at most~$t$ is bounded by $O_k(\exp(\delta t))$ for some positive constant $\delta$ depending on $k$.  See \cite[Thm.~B.6.2]{Hindry_Silverman_Book_2000}.
\end{proof}

\bibliographystyle{plain}
\bibliography{xander_bib}

\providecommand\biburl[1]{\texttt{#1}}
\begin{thebibliography}{1}

\bibitem{Bombieri_Mordell_1990}
Enrico Bombieri.
\newblock The {M}ordell conjecture revisited.
\newblock {\em Ann. Scuola Norm. Sup. Pisa Cl. Sci. (4)}, 17(4):615--640, 1990.

\bibitem{Call_Silverman_1993}
Gregory~S. Call and Joseph~H. Silverman.
\newblock Canonical heights on varieties with morphisms.
\newblock {\em Compositio Math.}, 89(2):163--205, 1993.

\bibitem{deDiego_Faltings_in_Families_1996}
Teresa de~Diego.
\newblock Th\'eor\`eme de {F}altings (conjecture de {M}ordell) pour les
  familles alg\'ebriques de courbes.
\newblock {\em C. R. Acad. Sci. Paris S\'er. I Math.}, 323(2):175--178, 1996.

\bibitem{Faber_Hutz_Origin_2008}
Xander Faber and Benjamin Hutz.
\newblock On the number of rational iterated pre-images of the origin under
  quadratic dynamical systems.
\newblock \verb+arXiv:0810.1715v2 [math.NT]+, preprint, 2008.

\bibitem{FHIJMTZ}
Xander Faber, Benjamin Hutz, Patrick Ingram, Rafe Jones, Michelle Manes,
  Thomas~J. Tucker, and Michael~E. Zieve.
\newblock Uniform bounds on pre-images under quadratic dynamical systems.
\newblock {\em Math. Res. Lett.}, 16(1):87--101, 2009.

\bibitem{Faltings_Mordell}
Gerd Faltings.
\newblock Endlichkeitss\"atze f\"ur abelsche {V}ariet\"aten \"uber
  {Z}ahlk\"orpern.
\newblock {\em Invent. Math.}, 73(3):349--366, 1983.

\bibitem{Hindry_Silverman_Book_2000}
Marc Hindry and Joseph~H. Silverman.
\newblock {\em Diophantine geometry}, volume 201 of {\em Graduate Texts in
  Mathematics}.
\newblock Springer-Verlag, New York, 2000.
\newblock An introduction.

\bibitem{Vojta_Siegel_Mordell_1991}
Paul Vojta.
\newblock Siegel's theorem in the compact case.
\newblock {\em Ann. of Math. (2)}, 133(3):509--548, 1991.

\end{thebibliography}

\end{document}